%=================
% LaTeX preamble
% Thierry De Pauw
% April 11th 2019
%=================

%===============
% DOCUMENT CLASS
%===============

\documentclass[a4paper,reqno,10pt]{amsart}

%\documentclass[reqno,10pt]{book}
%\reversemarginpar
% The oneside option and reversemarginpar is for proper positioning of \hardone
%command

%=========
% PACKAGES
%=========

%\usepackage[notcite,notref]{showkeys} %comment this line for the final version

%\usepackage{makeidx} %uncomment these two lines
%\makeindex %for including an index

\usepackage{amsmath}
\usepackage{amsthm}
\usepackage{amssymb}
\usepackage{amscd} %diagrams

\usepackage{bbm} %bold numbers
\usepackage{mathrsfs} %nice calligraphy

\usepackage{lmodern} 
\usepackage[T1]{fontenc} %allows for hyphenation of words with accents
\usepackage{newtxtext,newtxmath}

\usepackage{manfnt} % The Bourbaki Z = \dbend

\usepackage{graphicx} %uncomment for inclusion of graphic images

\usepackage{verbatim} %DO NOT comment this line (comment environment)

\usepackage[Lenny]{fncychap} % other style = Rejne

\usepackage{sqrcaps} % square capital fonts

\usepackage{comment}

\usepackage[svgnames]{xcolor}
\usepackage{pdfcolmk}

\usepackage{tikz-cd}
\usepackage{pifont}

%==========
% WATERMARK
%==========

% I made it disappear / never used it

%===============
% FANCY HEADINGS
%===============

%primarily for typesetting a book or class notes
%from the LaTeX Companion, p.99

%comment the lines \begin{comment} and \end{comment}
%use the book documentclass

\begin{comment}

\usepackage{fancyhdr}
\pagestyle{fancyplain}

\lhead[\fancyplain{}{\bfseries\thepage}]%
{\fancyplain{}{\bfseries\rightmark}}
\rhead[\fancyplain{}{\bfseries\leftmark}]%
{\fancyplain{}{\bfseries\thepage}}
\cfoot{}

\end{comment}

%===========================
% THEOREM STYLE ENVIRONMENTS
%===========================

\swapnumbers %theorems numbering appears in front of 'Theorem'

\newtheoremstyle{exercise} %for books or class notes
  {3pt} %space above
  {3pt} %space below
  {\small\rmfamily} %body font
  {
} %indent amount(empty=no indent,\parindent=para indent)
  {\rmfamily\scshape} %thm head font
  {.} %punctuation after thm head
  {.5em} %space after thm head: " " = normal interword space;
%\newline = linebreak
  {} %thm head spec (can be left empty, meaning `normal')

\newtheoremstyle{newplain}
  {5pt}
  {5pt}
  {\itshape}
  {}
  {\rmfamily\scshape}
  {. ---}
  {.5em}
  {}

\newtheoremstyle{newremark}
  {5pt}
  {5pt}
  {\rmfamily}
  {}
  {\rmfamily\scshape}
  {. ---}
  {.5em}
  {}

%UNCOMMENT THE NEXT \begin{comment} ... \end{comment}
%IF WRITING IN FRENCH

%\begin{comment} 

\theoremstyle{newplain}
\newtheorem*{Theorem*}{Theorem} %no numbering for Theorem*

\theoremstyle{newplain}
\newtheorem{Theorem}{Theorem}
\newtheorem{Lemma}[Theorem]{Lemma}

\newtheorem{Proposition}[Theorem]{Proposition}
\newtheorem{Conjecture}[Theorem]{Conjecture}
\newtheorem{Definition}[Theorem]{Definition}

\theoremstyle{newremark}
\newtheorem{Empty}[Theorem]{}
\newtheorem{Remark}[Theorem]{Remark}

\newtheorem{Claim}[Theorem]{Claim}

\theoremstyle{exercise}

\numberwithin{Theorem}{section}
\numberwithin{Exercise}{section}

%\end{comment} 

%COMMENT THE NEXT \begin{comment} ... \end{comment}
%IF WRITING IN FRENCH

\begin{comment}

\theoremstyle{newplain}
\newtheorem*{Theoreme*}{Théorème} %no numbering for Theorem*

\theoremstyle{newplain}

\theoremstyle{newremark}

\theoremstyle{exercise}

\numberwithin{Theoreme}{section}
\numberwithin{Exercice}{section}

\end{comment}

%=========================
% BLACK BOARD BOLD SYMBOLS
%=========================

%\begin{comment}

 %natural numbers

 %rational numbers
%\newcommand{\bbS}{\mathbb{S}} %sphere \S is used for paragraph symbol
\newcommand{\R}{\mathbb{R}} %real numbers

\newcommand{\Rn}{\R^n}
 %extended real numbers
 %integers
 %The field of scalars
 % complex numbers
 % dense subset in field of scalars

 %probability measure
	  %expectation
  %variance

\newcommand{\ind}{\mathbbm{1}} %indicatrix function

%\end{comment}

%FEDERER'S STYLE

\begin{comment}

\newcommand{\R}{\mathbf{R}} %real numbers

\newcommand{\Rn}{\R^n}

\renewcommand{\setminus}{\thicksim} %set theoretic difference à la Federer

\end{comment}

%===========================
% BLACK BOARD BOLD SYMBOLS
%===========================

% ====================
% CALLIGRAPHIC SYMBOLS
% ====================

\newcommand{\calA}{\mathscr{A}}
\newcommand{\calB}{\mathscr{B}}
\newcommand{\calC}{\mathscr{C}}

\newcommand{\calE}{\mathscr{E}}

\newcommand{\calH}{\mathscr{H}}
\newcommand{\calI}{\mathscr{I}}

\newcommand{\calL}{\mathscr{L}}
\newcommand{\calM}{\mathscr{M}}

%=====================
% BOLD FRAKTUR SYMBOLS
%=====================

 %puissance du continu

%=====================
% FRAKTUR SYMBOLS
%=====================

% ==================
% SANS SERIF SYMBOLS
% ==================

%===================
% GREEK BOLD SYMBOLS
%===================

\newcommand{\balpha}{\boldsymbol{\alpha}}

%=============
% BOLD SYMBOLS
%=============

 %space of charges

\newcommand{\bG}{\mathbf{G}}
 %grassmannian

 % differential form

%\newcommand{\bf}{\mathbf{f}} %avoid using this one!

 %deviation
 %spherical excess
 %Excess
 %rectifiable varifolds

%===================
% MATH ROMAN SYMBOLS
%===================

 %adhérence
 %Affine maps
 %approximate limits and derivatives
%Atome
 %boundary
 %cardinal
 %closure
 %covering
 %covariance

 %derived functions
 %diameter

 %dimension
 %distance
 %Hausdorff distance
 %divergence
 %epigraph
 %extreme points

 %frontière

 %gradient
 %graph
 %Hessian
 %homomorphisms
 %the identity map
 %image
 %intérieur
 %interior
 %Laplacian

 %Lipschitz constant

 %oscillation
 %projectors
 %set
 %sign
 %singular set
 %slope
 %span
 %support
%espace de Stone
 %support, as well
\DeclareMathOperator{\rmTan}{\mathrm{Tan}} %tangent space or measure
 %trace

%================
% SPECIAL SYMBOLS
%================

%simplexes à la Federer, left and right

%fat simplexes instead

%the following provide the P ... Q and ? ... X style used by D. Fremlin

%restriction of a measure by W.F. Pfeffer

\newcommand{\hel} {
\hskip2.5pt{\vrule height7pt width.5pt depth0pt}
\hskip-.2pt\vbox{\hrule height.5pt width7pt depth0pt}
\, }

%\newcommand{\restr}{\hel}

%interior product, variant of the above

%the join of two currents
%WARNING: use within the displaystyle environment only

%for use for terminating the proof of a claim

%binomial coefficient

%dashed integrals

\def\XXint#1#2#3{{%
\setbox0=\hbox{$#1{#2#3}{\int}$}
\vcenter{\hbox{$#2#3$}}\kern-.5\wd0}}

% bold norms

% a symbol in the left margin

%==============
% ABBREVIATIONS
%==============

\newcommand{\vphi}{\varphi}

\newcommand{\la}{\langle}
\newcommand{\ra}{\rangle}

%=================
% RENEWCOMMANDS
%=================

\renewcommand{\em}{\bf}

\renewcommand{\leq}{\leqslant}
\renewcommand{\geq}{\geqslant}
\renewcommand{\subset}{\subseteq}

%============================
% TEMPORARY COLOURED COMMENTS
%============================

%============
% HYPHENATION
%============

\hyphenation{dé-nom-bra-ble}
\hyphenation{pré-cé-dent}
\hyphenation{pré-cé-dente}
\hyphenation{épi-gra-phe}
\hyphenation{sub-MSN}

%=========================
% PAGE DE GARDE (LIVRE) v1
%=========================

\begin{comment}

\definecolor{garde}{rgb}{0.88,0.88,1}
%\definecolor{title}{rgb}{0.68,0.40,0.98}
\renewcommand{\maketitle}{
\begin{titlepage}
%\makebox[0.9\textwidth]{%
\begin{center}
\includegraphics[width=\textwidth]{vol1.png}
\end{center}
%\vspace*{0.01\textheight}\noindent
{\sqrcfamily Thierry De Pauw}\hfill\par
\vspace*{0.01\textheight}
\noindent{\color{garde}\rule{\textwidth}{.2cm}}
%\rule{\textwidth}{1pt}\par
%\vspace{2pt}\vspace{-\baselineskip}
%\rule{\textwidth}{0.4pt}\par
\vspace{0.05\textheight}
\begin{center}
%\textcolor{title}{%\FSfont{5cz}% Chisel
{\sqrcfamily
{\huge Lecture Notes}\\[\baselineskip]
{\huge On Tame Analysis}\\[\baselineskip]% Delphian (5dp)
{\huge And Geometry}}% Mona Lisa
%}
\par
\vspace{0.0125\textheight}
%{\color{garde}\rule{0.45\textwidth}{1.2pt}}\par
%\vspace{0.1\textheight}
%{\sqrcfamily
%{\Large Tome 1}\\[2.5\baselineskip]
%{\Large Théorie de la Mesure et}\\[0.9\baselineskip]
%{\large et}\\[0.9\baselineskip]
%{\Large Analyse Fonctionnelle}}
\vfill
%{\large \textcolor{Red}{\plogo}}\\[0.5\baselineskip]
{\sqrcfamily 2014}\par
%\vspace*{\drop}
\end{center}
%}
%\hspace{4pt}
%{\color{garde}\rule{1cm}{\textheight}}
\end{titlepage}
}

\end{comment}

%=========================
% PAGE DE GARDE (LIVRE) v2
%=========================

\newlength{\drop}

\usepackage{trajan}

%================================
% Commandes ajoutées par Philippe
%================================
\newcommand{\defeq}{\mathrel{\mathop:}=}

%================
% DOCUMENT BEGINS
%================

\begin{document}

%=================
% TITLE AND AUTHOR
%=================

%\titleAT %see above / page de garde d'un livre

\title{A representation formula for members of SBV dual}

\def\curraddrname{{\itshape On leave of absence from}}

\author[Ph. Bouafia]{Philippe Bouafia}

\address{F\'ed\'eration de Math\'ematiques FR3487 \\
  CentraleSup\'elec \\
  3 rue Joliot Curie \\
  91190 Gif-sur-Yvette
}

\email{philippe.bouafia@centralesupelec.fr}

\author[Th. De Pauw]{Thierry De Pauw}

\address{School of Mathematical Sciences\\
Shanghai Key Laboratory of PMMP\\ 
East China Normal University\\
500 Dongchuang Road\\
Shanghai 200062\\
P.R. of China\\
and NYU-ECNU Institute of Mathematical Sciences at NYU Shanghai\\
3663 Zhongshan Road North\\
Shanghai 200062\\
China}
\curraddr{Universit\'e Paris Diderot\\ 
Sorbonne Universit\'e\\
CNRS\\ 
Institut de Math\'ematiques de Jussieu -- Paris Rive Gauche, IMJ-PRG\\
F-75013, Paris\\
France}
\email{thdepauw@math.ecnu.edu.cn,thierry.de-pauw@imj-prg.fr}

\keywords{Hausdorff measure, integral geometric measure, dual space, functions of bounded variation}

\subjclass[2010]{Primary 28A78,28A75,26A45,46E99}

\thanks{The first author was partially supported by the Science and Technology Commission of Shanghai (No. 18dz2271000).}

%\date{December 9th, 2018}

%=========
% ABSTRACT
%=========

\begin{abstract}
We give an integral representation formula for members of the dual of $SBV(\Rn)$ in terms of functions that are defined on $\hat{\R}^n$, an appropriate fiber space that we introduce, consisting of pairs $(x,[E]_x)$ where $[E]_x$ is an approximate germ of an $(n-1)$-rectifiable set $E$ at $x$.
\end{abstract}

\maketitle

%===========
% DEDICATION
%===========

\begin{comment}
\begin{flushright}
\textsf{\textit{Dedicated to: Joseph Plateau, Arno, Hooverphonic, Hercule
Poirot,\\
Kim Clijsters, Pierre Rapsat, and non Jef t'es pas tout seul}}
\end{flushright}
\end{comment}

%==================
% TABLE OF CONTENTS
%==================

%\tableofcontents
%\newpage

%==============================
% THE MEAT --- OR SO ONE THINKS
%==============================

\section{Introduction}

Let $n \geq 2$ and $\Omega \subset \Rn$ be open.
The Banach spaces $BV(\Omega)$ and $SBV(\Omega)$ consist, respectively, of functions of bounded variation \cite[3.1]{AMBROSIO.FUSCO.PALLARA} and special functions of bounded variation \cite[\S 4.1]{AMBROSIO.FUSCO.PALLARA}.
A long-standing open problem \cite[7.4]{ARCATA} is to provide a useful description of the dual of $BV(\Omega)$. 
This seems to be still open, despite important contributions by N.G. Meyers and W.P. Ziemer \cite{MEY.ZIE.77}, N.C. Phuc and M. Torres \cite{PHU.TOR.17},  and N. Fusco and D. Spector \cite{FUS.SPE.18}.
Regarding the dual of $SBV(\Omega)$, the second author contributed the article \cite{DEP.98}.
In the present paper, we describe the dual of $SBV(\Omega)$ in a way
that is ``optimal'' in some specific universal sense.  \par For each
$u \in SBV(\Omega)$, the distributional gradient $Du$ of $u$
decomposes as $Du = \calL^n \hel \nabla u + \calH^{n-1} \hel j_u$, where
$\nabla u$ is the pointwise a.e. approximate gradient of $u$, and
$j_u$ is a vector field carried on the approximate discontinuity set
$S_u$ of $u$, on which we have $j_u = (u^+-u^-)\nu_u$, $\nu_u$ being a
unitary field normal to $S_u$, and $u^+, u^-$ the approximate limits
of $u$ on either sides of $S_u$.
We have
\begin{equation*}
  \|u\|_{SBV(\Omega)} = \int_\Omega |u| \, d \calL^n + \int_{S_u} \|j_u\| \,
  d \calH^{n-1} + \int_\Omega \|\nabla u\| \, d \calL^n.
\end{equation*}
Thus, $u \mapsto (u, j_u, \nabla u)$ is an isometric embedding
\begin{equation*}
  SBV(\Omega) \to L^1(\Omega, \calB(\Omega), \calL^n) \times
  L^1(\Omega, \calB(\Omega), \calH^{n-1}; \R^n) \times L^1(\Omega,
  \calB(\Omega), \calL^n ; \R^n).
\end{equation*}
Since $(\Omega,\calB(\Omega),\calL^n)$ is $\sigma$-finite, the dual of the corresponding $L^1$ space is, of course, the corresponding $L^\infty$ space. 
This remark does not apply to the middle measure space $(\Omega,\calB(\Omega),\calH^{n-1})$. 
An attempt at understanding the dual of the corresponding $L^1$ space leads to a study of the canonical map
\begin{equation*}
\Upsilon \colon L^\infty(\Omega,\calB(\Omega),\calH^{n-1}) \to L^1(\Omega,\calB(\Omega),\calH^{n-1})^*\,.
\end{equation*}
This relates to the Radon-Nikod\'ym Theorem.
In this respect, we note that:
\begin{itemize}
\item $\Upsilon$ is not surjective. If $\calB(\Omega)$ is replaced
  with $\calM_{\calH^{n-1}}(\Omega)$, whether $\Upsilon$ is surjective
  or not is undecidable in ZFC.
\item $\Upsilon$ is injective. If $\calB(\Omega)$ is replaced with
  $\calM_{\calH^{n-1}}(\Omega)$, $\Upsilon$ is not injective.
\end{itemize}
Here, $\calM_{\calH^{n-1}}(\Omega)$ is the $\sigma$-algebra of
$\calH^{n-1}$-measurable subsets of $\Omega$.
For a detailed treatment of these, see \cite{DEP.21}.
Our goal now is to stick with the axioms of ZFC.
\par 
The problem is with measurability.
Let $\vphi \in L^1(\Omega,\calB(\Omega),\calH^{n-1})$ and let $E \subset \Omega$ be Borel such that $\calH^{n-1}(E) < \infty$.
If $\iota_E \colon L^1(E,\calB(E),\calH^{n-1}) \to L^1(\Omega,\calB(\Omega),\calH^{n-1})$ is the obvious injection, then $\vphi \circ \iota_E$ belongs to the dual of $L^1(E,\calB(E),\calH^{n-1})$ and, by the classical Riesz Theorem, is represented by a Borel measurable function $g_E \colon E \to \R$, i.e. $(\vphi \circ \iota_E)(f) = \int_E fg_E d\calH^{n-1}$ whenever $f \in L^1(E,\calB(E),\calH^{n-1})$.
The family $\la g_E\ra_E$ is compatible in the sense that $\calH^{n-1}(E \cap E' \cap \{ g_E \neq g_{E'}\})=0$ for all $E, E'$.
There does not exist, in general, a {\it gluing} of this family, i.e. a Borel measurable function $g \colon \Omega \to \R$ such that $\calH^{n-1}(E \cap \{ g \neq g_E \}) = 0$ for all $E$, and whether an $\calH^{n-1}$-measurable gluing of $\la g_E \ra_E$ exists is undecidable.
\par 
In fact, a ``proper'' gluing of $\la g_E \ra_E$ at $x \in \Omega$ depends both on $x$ and $E$.
However, it will depend on $(x,E)$ only according to the behavior of $E$ near $x$. 
This prompts us to introduce a notion of approximate germ of $E$ at $x$ -- finer than the first order tangential behavior in case $E$ is, for instance, a smooth hypersurface.
Carrying out these ideas to describe the dual of $L^1(\Omega,\calB(\Omega),\calH^{n-1})$ presents technical difficulties that we avoid by considering a slightly different measure space.  
The jump set $S_u$ of a function of bounded variation being countably
rectifiable, one has $\calH^{n-1} \hel S_u = \calI^{n-1}_\infty \hel
S_u$, where $\calI^{n-1}_\infty$ is an integral geometric measure.
The application to studying $SBV(\Omega)^*$ is not affected either when we replace $\calI^{n-1}_\infty$ by its semi-finite version $\calI^{n-1}$. 
Both measures are described in \ref{sf}.
What we have gained is that each set $E \in \calB(\Omega)$ with $\calI^{n-1}(E) < \infty$ has the property that $\Theta^{n-1}(E,x)=1$ for $\calI^{n-1}$-almost every $x \in E$, according to the Structure Theorem.
One recognizes a density property.
This is an essential ingredient of our proof below and would not hold with $\calH^{n-1}$ in place of $\calI^{n-1}$.
\par 
Notice that a family $\la g_E \ra_E$ is, in fact, the same thing as a function $g(x,E)$ that depends on both $x$ and $E$, as in \cite{DEP.98}, but the compatibility of the family $\la g_E \ra_E$ considered above means that there is then some redundancy in the corresponding space of variables $(x,E)$.
The gist of the present paper is to consider a special quotient of the space of pairs $(x,E)$ and to equip it with a structure of a measure space $(\hat{\Omega},\hat{\calA},\hat{\calI}^{n-1})$, see \ref{sf} below.
One can of course wonder if the quotient presented here is ``optimal'', i.e. the smallest one for the purpose of representing the dual of $L^1$, or if we are still left with some redundancy in the variables.
The optimality of the measure space $(\hat{\Omega},\hat{\calA},\hat{\calI}^{n-1})$ is stated in \cite{BOU.DEP.21} and established there, in an appropriate categorical setting. 
Thus, the combination of \cite{BOU.DEP.21} and the present paper can
be considered a solution to describing the dual of $SBV(\Omega)$, see
Theorem~\ref{main}, as explicitly as one possibly can in ZFC.

\begin{Empty}[Notations]
  Our choice of notation is mostly compatible with that of \cite{GMT}
  and \cite{AMBROSIO.FUSCO.PALLARA}. As an exception, we let $\ominus$
  be the set theoretic symmetric difference.  In $\Rn$, the
  $(n-1)$-dimensional Hausdorff measure is $\calH^{n-1}$ and the
  Lebesgue measure is $\calL^n$. The $\sigma$-algebras consisting of
  Borel subsets and $\calH^{n-1}$-measurable subsets of $E \subset
  \Rn$ are respectively denoted by $\calB(E)$ and
  $\calM_{\calH^{n-1}}(E)$. The restriction symbol for measures is
  $\hel$ \cite[2.1.2]{GMT}. The lower and upper $(n-1)$-dimensional
  densities of $E \subset \Rn$ at $x \in \Rn$, denoted
  $\Theta_*^{n-1}(E,x)$ and $\Theta^{*\,n-1}(E,x)$, respectively, are
  defined in \cite[2.10.19]{GMT} (with $\mu = \calL^n \hel E$). Both
  $x \mapsto \Theta_*^{n-1}(E,x)$ and $x \mapsto \Theta^{*\,n-1}(E,x)$
  are Borel measurable. When they coincide at $x$, the common value is
  denoted $\Theta^{n-1}(E,x)$.
\end{Empty}

\section{Results}

\begin{Empty}
  \label{para1}
  A set $E \subset \R^n$ is called \textbf{countably
    $(n-1)$-rectifiable} whenever there are countably many Lipschitz
  maps $f_k \colon \R^{n-1} \to \R^n$ such that
  \[
  \calH^{n-1}\left( E \setminus \bigcup_{k = 0}^\infty f_k(\R^{n-1})
  \right) = 0.
  \]
  Also, $E$ is called \textbf{$(n-1)$-rectifiable} if it is
  $\calH^{n-1}$-measurable, of finite $\calH^{n-1}$-measure, and
  countably $(n-1)$-rectifiable.

  Throughout the paper, we fix an open subset $\Omega \subset
  \R^n$. For an $(n-1)$-rectifiable set $E \subset \Omega$, the
  following hold:
  \begin{enumerate}
  \item[(A)] $\Theta^{n-1}(E, x) = 1$ for $\calH^{n-1}$-almost all $x
    \in E$, \cite[3.2.19]{GMT};
  \item[(B)] $\Theta^{n-1}(E, x) = 0$ for $\calH^{n-1}$-almost all $x
    \in \Omega \setminus E$, \cite[2.10.19(4)]{GMT}. 
  \end{enumerate}
  The set $\tilde{E} = \{x \in \Omega : \Theta^{n-1}(E, x) = 1\}$
  coincides $\calH^{n-1}$-almost everywhere with $E$, i.e. $\calH^{n-1}(\tilde{E} \ominus E) = 0$. It satisfies:
  \begin{enumerate}
  \item[(C)] $\tilde{E}$ is a Borel measurable $(n-1)$-rectifiable
    subset of $\Omega$;
  \item[(D)] A point $x \in \Omega$ is in $\tilde{E}$ iff
    $\Theta^{n-1}(\tilde{E}, x) = 1$.
  \end{enumerate}
\end{Empty}

\begin{Empty}
\label{sf}
For $S \subset \Omega$, we let
  \[
  \calI^{n-1}(S) = \sup \left\{ \calH^{n-1}(S \cap E) : E \subset
  \Omega \text{ is an } (n-1)\text{-rectifiable set} \right\}.
  \]
  It is easily checked that $\calI^{n-1}$ is a Borel regular outer
  measure on $\Omega$. In fact, the measure space
  $(\Omega,\calB(\Omega),\calI^{n-1})$ is the semi-finite version, in
  the sense of \cite[213X(c)]{FREMLIN.II}, of
  $(\Omega,\calB(\Omega),\calI^{n-1}_\infty)$, where the integral
  geometric measure $\calI^{n-1}_\infty$ is defined in
  \cite[2.10.5(1)]{GMT}. The latter is a consequence of the Structure
  Theorem \cite[3.3.14]{GMT} and \cite[3.2.26]{GMT}. The measure
  $\calI^{n-1}$ is much more manageable than the Hausdorff measure
  $\calH^{n-1}$, as it ignores purely unrectifiable sets. We notice
  that $\calH^{n-1} \hel E = \calI^{n-1} \hel E$ for any countably
  $(n-1)$-rectifiable Borel set $E \subset \Omega$. To find an
  integral representation for the elements of $SBV(\Omega)^*$, it is
  crucial to describe the dual of $L^1(\Omega, \calB(\Omega),
  \calI^{n-1})$. We will do this in Theorem~\ref{dualL1}. Prior to
  that, we must construct a particular measure space $(\hat{\Omega},
  \hat{\calA}, \hat{\calI}^{n-1})$.
\end{Empty}

\begin{Empty}[The measure space $(\hat{\Omega}, \hat{\calA},
    \hat{\calI}^{n-1})$] 
\label{fs}    
    Let $\calE$ be the collection of sets that
  satisfy conditions~(C) and~(D) of \ref{para1}.  For each
  $x \in \Omega$, we let $\calE_x \defeq \{E \in \calE : x \in
  E\}$. We define an equivalence relation $\sim_x$ on $\calE_x$:
  \[
  E \sim_x E' \text{ iff } \Theta^{n-1}(E \ominus E', x) = 0 \text{
    iff } \Theta^{n-1}(E \cap E', x) = 1.
  \]
  The equivalence class $[E]_x$ under $\sim_x$ of some $E \in \calE_x$
  is called the {\em approximate germ of $E$ at $x$}. We will use
  repeatedly the following consequence of \ref{para1}(A):
  if $E, E' \in \calE$, then $[E]_x = [E']_x$ for $\calH^{n-1}$-almost
  all $x \in E \cap E'$.

  We let $\hat{\Omega}$ be the set consisting of the pairs $(x,
  [E]_x)$, where $x \in \Omega$ and $[E]_x$ is some approximate germ
  at $x$. This set should be thought of as a fiber space over $\Omega$,
  with respect to the obvious projection map $p \colon \hat{\Omega} \to \Omega$
  that sends each $(x, [E]_x)$ to $x$. Below, we equip $\hat{\Omega}$
  with a $\sigma$-algebra and a measure.

  For each $E \in \calE$, we let $\gamma_E \colon E \to \hat{\Omega}$
  be the map defined by $ \gamma_E(x) \defeq (x, [E]_x)$. We define
  $\hat{\calA}$ to be the finest $\sigma$-algebra on $\hat{\Omega}$ so
  that all maps $\gamma_E$ are $(\calM_{\calH^{n-1}}(E),
  \hat{\calA})$-measurable, that is
  \[
  \hat{\calA} \defeq \left\{ A \subset \hat{\Omega} : \gamma_E^{-1}(A)
  \in \calM_{\calH^{n-1}}(E) \text{ for all } E \in \calE\right\}.
  \]
  For all $E \in \calE$, we denote by $\mu_E$ the measure on
  $(\hat{\Omega}, \hat{\calA})$ defined by $\mu_E(A) \defeq
  \calH^{n-1}(\gamma_E^{-1}(A))$.  We let $\hat{\calI}^{n-1}$ be the
  least upper bound of the measures $\mu_E$, with $E$ running over
  $\calE$ (see e.g. \cite[1.68]{AMBROSIO.FUSCO.PALLARA}). The measure
  space $(\hat{\Omega}, \hat{\calA}, \hat{\calI}^{n-1})$ is complete:
  a set $N \subset \hat{\Omega}$ is negligible iff $\gamma_E^{-1}(N)$
  is $\calH^{n-1}$-negligible for all $E \in \calE$.

  From the definition of the $\sigma$-algebra $\hat{\calA}$, it is
  easy to show that a map $g \colon \hat{\Omega} \to Z$ with values in
  a measurable space $(Z, \calC)$ is $(\hat{\calA}, \calC)$-measurable
  iff all maps $g \circ \gamma_E$ are $(\calM_{\calH^{n-1}}(E),
  \calC)$-measurable. As a consequence, the projection map $p$ is
  $(\hat{\calA}, \calB(\Omega))$-measurable, for the maps $p \circ
  \gamma_E$ are the inclusion maps $E \to \Omega$ and, therefore, are
  $\calH^{n-1}$-measurable.
\end{Empty}

\begin{Lemma}
  \label{propMeasI}
  The following hold:
  \begin{enumerate}
  \item For all $A \in \hat{\calA}$, one has $\hat{\calI}^{n-1}(A) =
    \sup \left\{ \mu_E(A) : E \in \calE
    \right\}$.
   
  %\item If $A \in \hat{\calA}$ is $\sigma$-finite, then there is $X
  %  \in \calB(\Omega)$ such that $\hat{\calI}^{n-1}(A \ominus
  %  p^{-1}(X)) = 0$.
   
  \item $p_\# \hat{\calI}^{n-1} = \calI^{n-1}$.
  \item For $E \in \calE$, we have $\hat{\calI}^{n-1}(p^{-1}(E)
    \setminus \gamma_E(E)) = 0$
  \end{enumerate}
\end{Lemma}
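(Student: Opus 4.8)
The three parts all follow, through the description of $\hat{\calI}^{n-1}$ as the least upper bound of the measures $\mu_E$, from a single fact: the family $\{\mu_E : E \in \calE\}$ is upward directed, meaning that any finite subfamily is dominated, as measures on $(\hat{\Omega},\hat{\calA})$, by one $\mu_{E^*}$ with $E^* \in \calE$. So the plan is to prove this directedness, then derive (1), and finally read off (2) and (3) using (1) together with the fact that $p \circ \gamma_E$ is the inclusion of $E$ into $\Omega$.

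For directedness it is enough to handle two sets $E, E' \in \calE$ and iterate. I would put $F \defeq \widetilde{E \cup E'}$. Since $E \cup E'$ is Borel, has finite $\calH^{n-1}$-measure, and is countably $(n-1)$-rectifiable, it is $(n-1)$-rectifiable, so $F \in \calE$ by \ref{para1}(C)--(D). As $E \subseteq E \cup E'$ and $F$ differs from $E \cup E'$ by an $\calH^{n-1}$-null set, $E \cap F$ coincides with $E$ up to a null set; the consequence of \ref{para1}(A) recorded in \ref{fs}, applied to the pair $E, F \in \calE$, then yields $[E]_x = [F]_x$ for $\calH^{n-1}$-a.e. $x \in E$. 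Hence for every $A \in \hat{\calA}$ the sets $\gamma_E^{-1}(A)$ and $\gamma_F^{-1}(A) \cap E$ agree up to a null set, so $\mu_F(A) \geq \calH^{n-1}(\gamma_F^{-1}(A) \cap E) = \calH^{n-1}(\gamma_E^{-1}(A)) = \mu_E(A)$, and symmetrically $\mu_F(A) \geq \mu_{E'}(A)$. Thus $\mu_F \geq \mu_E$ and $\mu_F \geq \mu_{E'}$ as measures, and induction takes care of arbitrary finite subfamilies.

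Part (1) is then immediate: by the formula for a lattice supremum of positive measures (cf. \cite[1.68]{AMBROSIO.FUSCO.PALLARA}), $\hat{\calI}^{n-1}(A)$ is the supremum of $\sum_h \mu_{E_h}(A_h)$ over countable measurable partitions $\{A_h\}$ of $A$ and choices $(E_h)$ in $\calE$; each finite subsum is at most $\mu_{E^*}(A) \leq \sup_E \mu_E(A)$ for a suitable $E^*$ furnished by directedness, so the whole sum is $\leq \sup_E \mu_E(A)$, while the opposite inequality is trivial. For Part (2): $p$ being $(\hat{\calA},\calB(\Omega))$-measurable, for $S \in \calB(\Omega)$ we get from (1) and $\gamma_E^{-1}(p^{-1}(S)) = (p \circ \gamma_E)^{-1}(S) = S \cap E$ that $(p_\#\hat{\calI}^{n-1})(S) = \hat{\calI}^{n-1}(p^{-1}(S)) = \sup_{E \in \calE} \calH^{n-1}(S \cap E)$; and this equals $\calI^{n-1}(S)$ because every $E \in \calE$ is $(n-1)$-rectifiable while, conversely, any $(n-1)$-rectifiable $E'$ may be replaced by $\widetilde{E'} \in \calE$ without altering $\calH^{n-1}(S \cap \cdot)$, since $E' \ominus \widetilde{E'}$ is $\calH^{n-1}$-null. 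For Part (3): for each $E' \in \calE$ one computes $\gamma_{E'}^{-1}\big(p^{-1}(E) \setminus \gamma_E(E)\big) = \{x \in E \cap E' : [E']_x \neq [E]_x\}$, which is $\calH^{n-1}$-null by the consequence of \ref{para1}(A); in particular $p^{-1}(E)\setminus\gamma_E(E) \in \hat{\calA}$, and applying (1) gives $\hat{\calI}^{n-1}\big(p^{-1}(E)\setminus\gamma_E(E)\big) = \sup_{E'} \calH^{n-1}\big(\{x \in E \cap E' : [E']_x \neq [E]_x\}\big) = 0$.

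The only step carrying genuine content is the directedness claim, and within it the a.e. identity $[E]_x = [F]_x$ on $E$ for $F = \widetilde{E\cup E'}$; the main thing is to obtain it cleanly from the germ-stability consequence of \ref{para1}(A) rather than re-running density estimates by hand. Everything after that is routine manipulation of the supremum of measures and of the relation $p \circ \gamma_E = (E \hookrightarrow \Omega)$.
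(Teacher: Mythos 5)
Your proof is correct and follows essentially the same route as the paper: the heart of both arguments is that for $E,E'\in\calE$ one can pass to a set in $\calE$ coinciding $\calH^{n-1}$-almost everywhere with $E\cup E'$ and use the germ compatibility from \ref{para1}(A) to get domination of $\mu_E$ and $\mu_{E'}$, after which (2) and (3) follow from (1) exactly as you describe. The only cosmetic difference is in packaging part (1): you make the upward directedness of $\{\mu_E\}$ explicit and collapse the partition sums in the formula of \cite[1.68]{AMBROSIO.FUSCO.PALLARA}, whereas the paper verifies directly that $A\mapsto\sup_E\mu_E(A)$ is superadditive on disjoint sets --- the same computation in a different order.
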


\begin{proof}
  (1) Letting $\nu(A)$ denote the right hand side of the claimed equality, it suffices to check that $A \mapsto \nu(A)$ defines a
  measure. Clearly, $\nu$ is
  non-decreasing and $\sigma$-subadditive.

  It remains to check that $\nu(A_1 \cup A_2) \geq
  \nu(A_1) + \nu(A_2)$ for any pair of
  disjoint $A_1, A_2 \in \hat{\calA}$. Let $E_1, E_2 \in
  \calE$. Recalling the discussion in \ref{para1}, there is
  a set $E \in \calE$ that coincides $\calH^{n-1}$-almost everywhere
  with $E_1 \cup E_2$. For $i \in \{1, 2\}$, we have $[E]_x = [E_i]_x$
  for $\calH^{n-1}$-almost all $x \in E \cap E_i$. From this, it
  follows that
  \[
  \gamma_{E_i}^{-1}(A_i) \setminus \gamma_E^{-1}(A_i) \subset (E_i
  \setminus E) \cup \{x \in E \cap E_i : [E]_x \neq [E_i]_x\}
  \]
  is $\calH^{n-1}$-negligible. Hence, $\mu_E(A_i) \geq \mu_{E_i}(A_i)$.
  As $A_1$ and $A_2$ are disjoint, one has $\mu_E(A_1 \cup A_2) =
  \mu_E(A_1) + \mu_E(A_2) \geq \mu_{E_1}(A_1) + \mu_{E_2}(A_2)$.  The
  proof is then completed by taking the supremum over $E_1, E_2 \in
  \calE$.

  (2) For all $B \in \calB(\Omega)$, one has by~(1)
  \begin{align*}
  \hat{\calI}^{n-1}(p^{-1}(B)) & = \sup \left\{
  \calH^{n-1}(\gamma_E^{-1}p^{-1}(B)) : E \in \calE \right\} \\ & =
  \sup \left\{ \calH^{n-1}(B \cap E) : E \in \calE \right\},
  \end{align*}
  which is $\calI^{n-1}(B)$.

  (3) Let $E' \in \calE$. The set
  \[
  \gamma_{E'}^{-1}(p^{-1}(E) \setminus \gamma_E(E)) = \{x \in E \cap
  E' : [E]_x \neq [E']_x \}
  \]
  has $\calH^{n-1}$ measure zero. By the arbitrariness of $E'$, it
  follows that $p^{-1}(E) \setminus \gamma_E(E) \in \hat{\calA}$ and
  $\hat{\calI}^{n-1}(p^{-1}(E) \setminus \gamma_E(E)) = 0$.
\end{proof}

\begin{Empty}[Density points]
  We recall the following well-known facts regarding the existence of
  Lebesgue density points for functions defined on rectifiable
  sets. Let $E$ be an $(n-1)$-rectifiable set and $h \colon E \to \R$
  be a function in $L^1(E, \calM_{\calH^{n-1}}(E), \calH^{n-1} \hel
  E)$, we have
  \[
  \frac{1}{\balpha(n-1)r^{n-1}} \int_{E \cap B(x,r)} h \, d
  \calH^{n-1} \xrightarrow[r \to 0]{} h(x)
  \]
  at $\calH^{n-1}$-almost every $x \in E$. When the limit exists, $x$
  is called a density point of $h$. The proof of the next lemma is
  elementary.
\end{Empty}

\begin{Lemma}
  \label{sameDens}
  Let $E, E'$ be two Borel measurable $(n-1)$-rectifiable susbsets of
  $\Omega$, $h \in L^\infty(E, \calM_{\calH^{n-1}}(E), \calH^{n-1}
  \hel E)$, $h' \in L^\infty(E', \calM_{\calH^{n-1}}(E'), \calH^{n-1}
  \hel E')$ be two functions, and $x \in E \cap E'$. Assume that
  \begin{enumerate}
  \item[(A)] $h = h'$ $\calH^{n-1}$-almost everywhere on $E \cap E'$;
  \item[(B)] $\Theta^{n-1}(E,x) = \Theta^{n-1}(E', x) = \Theta^{n-1}(E \cap E', x) = 1$;
  \item[(C)] $x$ is a density point of $h$.
  \end{enumerate}
  Then $x$ is a density point of $h'$ and
  \[
  \lim_{r \to 0} \frac{1}{\balpha(n-1)r^{n-1}} \int_{E \cap B(x,r)} h
  \, d \calH^{n-1} = \lim_{r \to 0} \frac{1}{\balpha(n-1)r^{n-1}}
  \int_{E' \cap B(x,r)} h' \, d \calH^{n-1}.
  \]
\end{Lemma}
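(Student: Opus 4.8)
The plan is to compare, for small $r>0$, the three averages
\[
a(r)\defeq\frac{1}{\balpha(n-1)r^{n-1}}\int_{E\cap B(x,r)}h\,d\calH^{n-1},\quad a'(r)\defeq\frac{1}{\balpha(n-1)r^{n-1}}\int_{E'\cap B(x,r)}h'\,d\calH^{n-1},
\]
\[
b(r)\defeq\frac{1}{\balpha(n-1)r^{n-1}}\int_{(E\cap E')\cap B(x,r)}h\,d\calH^{n-1},
\]
and to use hypothesis (B) to show that the ``error'' sets $E\setminus E'$ and $E'\setminus E$ carry no $(n-1)$-dimensional mass near $x$. Put $M\defeq\max\{\|h\|_{L^\infty},\|h'\|_{L^\infty}\}<\infty$; by (A), the value of $b(r)$ is unchanged if $h$ is replaced by $h'$ in its defining integral.

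First I would extract from (B) the density statements
\[
\lim_{r\to0}\frac{\calH^{n-1}(E\cap B(x,r))}{\balpha(n-1)r^{n-1}}=\lim_{r\to0}\frac{\calH^{n-1}(E'\cap B(x,r))}{\balpha(n-1)r^{n-1}}=\lim_{r\to0}\frac{\calH^{n-1}((E\cap E')\cap B(x,r))}{\balpha(n-1)r^{n-1}}=1.
\]
Since $E\cap B(x,r)$ is the disjoint union of $(E\setminus E')\cap B(x,r)$ and $(E\cap E')\cap B(x,r)$, and all three have finite $\calH^{n-1}$-measure (being contained in $E$), subtracting gives
\[
\lim_{r\to0}\frac{\calH^{n-1}((E\setminus E')\cap B(x,r))}{\balpha(n-1)r^{n-1}}=1-1=0,
\]
and the same holds with the roles of $E$ and $E'$ exchanged.

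Next I would bound the discrepancies between the averages. Since $h$ is bounded on the finite-measure set $E$ it is integrable there, so $\int_{E\cap B(x,r)}h\,d\calH^{n-1}=\int_{(E\setminus E')\cap B(x,r)}h\,d\calH^{n-1}+\int_{(E\cap E')\cap B(x,r)}h\,d\calH^{n-1}$, whence
\[
|a(r)-b(r)|\leq M\,\frac{\calH^{n-1}((E\setminus E')\cap B(x,r))}{\balpha(n-1)r^{n-1}},
\]
and, symmetrically, $|a'(r)-b(r)|\leq M\,\calH^{n-1}((E'\setminus E)\cap B(x,r))/(\balpha(n-1)r^{n-1})$, here using that $b(r)=\frac{1}{\balpha(n-1)r^{n-1}}\int_{(E\cap E')\cap B(x,r)}h'\,d\calH^{n-1}$. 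By (C), $a(r)$ converges as $r\to0$; call the limit $c$. The first estimate together with the previous paragraph gives $b(r)\to c$, and then the second estimate gives $a'(r)\to c$. Therefore $x$ is a density point of $h'$, and the two limits in the statement both equal $c$. I do not foresee any substantive difficulty; the only point meriting care is the finiteness of $\calH^{n-1}(E)$, which legitimizes subtracting measures and integrating $h$ and $h'$ over the relevant subsets.
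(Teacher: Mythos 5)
Your argument is correct, and it is precisely the elementary proof the paper has in mind (the paper omits it, remarking only that the proof is elementary): split off the symmetric difference, use hypothesis (B) and additivity of the finite measure $\calH^{n-1}\hel E$ to see that $E\setminus E'$ and $E'\setminus E$ have density $0$ at $x$, and absorb the resulting error terms via the $L^\infty$ bounds. No gaps; your care about the finiteness of $\calH^{n-1}(E)$ and $\calH^{n-1}(E')$, which justifies the subtraction, is the only point that needed attention.
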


We now come to our main result.

\begin{Theorem}
  \label{dualL1}
  The map $\Upsilon \colon L^\infty(\hat{\Omega}, \hat{\calA},
  \hat{\calI}^{n-1}) \to L^1(\Omega, \calB(\Omega), \calI^{n-1})^*$
  defined by
  \[
  \Upsilon(g)(f) = \int_{\hat{\Omega}} g(f \circ p) \, d\hat{\calI}^{n-1}
  \]
  is an isometric isomorphism.
\end{Theorem}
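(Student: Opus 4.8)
The plan is to establish three facts: $\Upsilon$ is well defined and $1$-Lipschitz; $\Upsilon$ is isometric, hence injective; and $\Upsilon$ is onto. Two consequences of Lemma~\ref{propMeasI} will be used constantly. First, each $\gamma_E$ is injective (it preserves the first coordinate) and, for $A \in \hat{\calA}$ with $A \subset \gamma_E(E)$, one has $\hat{\calI}^{n-1}(A) = \mu_E(A)$: for any $E' \in \calE$, membership $(x,[E']_x) \in A \subset \gamma_E(E)$ forces $x \in E$ and $[E']_x = [E]_x$, so $\gamma_{E'}^{-1}(A) \subset \gamma_E^{-1}(A)$, whence $\mu_{E'}(A) \le \mu_E(A)$ and Lemma~\ref{propMeasI}(1) gives the claim. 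Equivalently $\hat{\calI}^{n-1} \hel \gamma_E(E) = (\gamma_E)_\# (\calH^{n-1}\hel E)$, which yields the change-of-variables identity $\int_{\gamma_E(E)} \psi \, d\hat{\calI}^{n-1} = \int_E \psi \circ \gamma_E \, d\calH^{n-1}$ for $\hat{\calA}$-measurable $\psi$. Second, if $f \in L^1(\Omega,\calB(\Omega),\calI^{n-1})$ then $f\circ p$ is $\hat{\calA}$-measurable, and since $p_\#\hat{\calI}^{n-1} = \calI^{n-1}$ the value $\Upsilon(g)(f)$ is independent of the chosen representatives and obeys $|\Upsilon(g)(f)| \le \|g\|_\infty \int_{\hat{\Omega}}|f\circ p|\,d\hat{\calI}^{n-1} = \|g\|_\infty\|f\|_1$; thus $\Upsilon$ is well defined with $\|\Upsilon(g)\| \le \|g\|_\infty$.

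For the reverse inequality, assume $\|g\|_\infty = 1$ and, replacing $g$ by $-g$ if needed, that the essential supremum of $g$ is $1$. Fix $\veps \in (0,1)$, set $A = \{g > 1-\veps\}$ (a set of positive $\hat{\calI}^{n-1}$-measure), and use Lemma~\ref{propMeasI}(1) to find $E \in \calE$ with $\calH^{n-1}(\gamma_E^{-1}(A)) > 0$. The set $\gamma_E^{-1}(A)$ is $(n-1)$-rectifiable, so by \ref{para1} the set $E_0 \defeq \{x : \Theta^{n-1}(\gamma_E^{-1}(A),x)=1\}$ lies in $\calE$ and coincides $\calH^{n-1}$-a.e. with $\gamma_E^{-1}(A)$; in particular $0 < \calH^{n-1}(E_0) = \calI^{n-1}(E_0) < \infty$. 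With $f = \calH^{n-1}(E_0)^{-1}\ind_{E_0}$ one has $\|f\|_1 = 1$, and by Lemma~\ref{propMeasI}(3) and the change-of-variables identity
\[
\Upsilon(g)(f) = \frac{1}{\calH^{n-1}(E_0)}\int_{p^{-1}(E_0)} g\,d\hat{\calI}^{n-1} = \frac{1}{\calH^{n-1}(E_0)}\int_{E_0} g\circ\gamma_{E_0}\,d\calH^{n-1}.
\]
For $\calH^{n-1}$-a.e. $x \in E_0$ one has $x \in \gamma_E^{-1}(A)$ and, as $E, E_0 \in \calE$ share the approximate germ at such $x$ (by \ref{para1}(A)), $\gamma_{E_0}(x) = \gamma_E(x) \in A$; hence $g\circ\gamma_{E_0} > 1-\veps$ $\calH^{n-1}$-a.e. on $E_0$, so $\Upsilon(g)(f) \ge 1-\veps$. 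Letting $\veps \to 0$ gives $\|\Upsilon(g)\| \ge 1$, so $\Upsilon$ is isometric and injective.

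The substantial part is surjectivity. Let $\Phi \in L^1(\Omega,\calB(\Omega),\calI^{n-1})^*$. For $E \in \calE$ (a Borel $(n-1)$-rectifiable set equal to its density-one set), the zero-extension map $\iota_E \colon L^1(E,\calM_{\calH^{n-1}}(E),\calH^{n-1}) \to L^1(\Omega,\calB(\Omega),\calI^{n-1})$ is an isometric embedding (because $\calH^{n-1}\hel E = \calI^{n-1}\hel E$), so the classical Riesz theorem represents $\Phi\circ\iota_E$ by a unique $g_E \in L^\infty(E,\calM_{\calH^{n-1}}(E),\calH^{n-1})$ with $\|g_E\|_\infty \le \|\Phi\|$; testing against functions supported on $E \cap E'$ shows $g_E = g_{E'}$ $\calH^{n-1}$-a.e. on $E \cap E'$. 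The crux is to glue the $g_E$ into one function on $\hat{\Omega}$ — impossible, as recalled in the introduction, at the level of $\Omega$. I would set, for $(x,[E]_x) \in \hat{\Omega}$,
\[
g(x,[E]_x) = \lim_{r\to 0}\frac{1}{\balpha(n-1)r^{n-1}}\int_{E\cap B(x,r)} g_E\,d\calH^{n-1}
\]
when the limit exists, and $g(x,[E]_x) = 0$ otherwise. The right-hand side depends on $g_E$ only through its $\calH^{n-1}$-class, and Lemma~\ref{sameDens} — with hypothesis (A) supplied by compatibility and (B) by the fact that $E,E' \in \calE$ sharing the germ at $x$ satisfy $\Theta^{n-1}(E,x) = \Theta^{n-1}(E',x) = \Theta^{n-1}(E\cap E',x) = 1$ — ensures that the limit exists for one representative of the germ if and only if it exists for any other, with equal values; so $g$ is well defined on $\hat{\Omega}$. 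By the Lebesgue density result recalled before Lemma~\ref{sameDens}, the limit equals $g_E(x)$ for $\calH^{n-1}$-a.e. $x \in E$, i.e. $g\circ\gamma_E = g_E$ $\calH^{n-1}$-a.e.; hence $g$ is $\hat{\calA}$-measurable by the criterion in \ref{fs}, and $|g\circ\gamma_E| \le \|\Phi\|$ a.e. for every $E$ forces $\|g\|_\infty \le \|\Phi\|$, so $g \in L^\infty(\hat{\Omega},\hat{\calA},\hat{\calI}^{n-1})$.

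It remains to verify $\Upsilon(g) = \Phi$. For $E \in \calE$ and $h \in L^1(E,\calM_{\calH^{n-1}}(E),\calH^{n-1})$ with Borel representative $\bar{h}$, combining Lemma~\ref{propMeasI}(3), the change-of-variables identity, $g\circ\gamma_E = g_E$ and $p\circ\gamma_E = \mathrm{id}_E$ yields $\Upsilon(g)(\iota_E(h)) = \int_E g_E h\,d\calH^{n-1} = \Phi(\iota_E(h))$. For arbitrary $f \in L^1(\Omega,\calB(\Omega),\calI^{n-1})$, I would invoke that $\calI^{n-1}$ ignores purely unrectifiable sets: each $\{|f| \ge 1/k\}$ has finite $\calI^{n-1}$-measure, hence, by the definition of $\calI^{n-1}$ as a supremum over rectifiable sets, differs $\calI^{n-1}$-negligibly from a Borel countably $(n-1)$-rectifiable set; taking the union over $k$, refining to a countable Borel partition into $(n-1)$-rectifiable pieces and replacing each by the corresponding member of $\calE$, one writes $f$ as an $L^1(\Omega,\calI^{n-1})$-convergent sum $\sum_k f_k$ with each $f_k$ of the above type. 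Continuity of $\Upsilon(g)$ and $\Phi$ together with their agreement on every $f_k$ gives $\Upsilon(g)(f) = \Phi(f)$, completing the proof. The main obstacle is exactly this gluing step: promoting the compatible family $\langle g_E\rangle_E$, which need not admit any measurable gluing on $\Omega$, to a bona fide $\hat{\calA}$-measurable function on $\hat{\Omega}$, and it is the coincidence of Lebesgue density limits along rectifiable sets with a common approximate germ (Lemma~\ref{sameDens}) that makes this go through.
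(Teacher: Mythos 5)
Your proof is correct and follows essentially the same route as the paper: the norm bound via $p_\#\hat{\calI}^{n-1}=\calI^{n-1}$, the isometry by pulling the set $\{|g|>\|g\|_\infty-\veps\}$ back to a positive-measure subset of some $E\in\calE$, and surjectivity by gluing the local Riesz representatives $g_E$ through Lebesgue density limits on $\hat{\Omega}$ with Lemma~\ref{sameDens}, then reducing a general $f$ to pieces supported on members of $\calE$. The only differences are cosmetic (your explicit identity $\hat{\calI}^{n-1}\hel\gamma_E(E)=(\gamma_E)_\#(\calH^{n-1}\hel E)$, an indicator rather than a $\pm1$-valued test function, and a series decomposition in place of the truncation $\{|f|>\veps\}$ with $\veps\to0$).
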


\begin{proof}
  Let $g \in L^\infty(\hat{\Omega}, \hat{\calA}, \hat{\calI}^{n-1})$
  and $f \in L^1(\Omega, \calB(\Omega), \calI^{n-1})$. Then
  \begin{equation*}
    |\Upsilon(g)(f)|  \leq \|g\|_\infty \int_{\hat{\Omega}} |f \circ p| \, d \hat{\calI}^{n-1} 
     = \|g\|_\infty \int_{\hat{\Omega}} |f| \, dp_{\#}\hat{\calI}^{n-1} = \|g\|_\infty\|f\|_1,
  \end{equation*}
  by Lemma~\ref{propMeasI}(2). This shows that $\Upsilon$ is
  well-defined and $\|\Upsilon\| \leq 1$.

  Let $\varepsilon > 0$. The set $A \defeq \{|g| > \|g\|_\infty -
  \varepsilon\}$ has positive $\hat{\calI}^{n-1}$ measure, hence we
  infer the existence of an $(n-1)$-rectifiable set $E \in \calE$ such
  that $\calH^{n-1}(\gamma_E^{-1}(A)) > 0$. The function $g \circ
  \gamma_E$ is $\calH^{n-1}$-measurable. As the measure space $(E,
  \calM_{\calH^{n-1}}(E), \calH^{n-1} \hel E)$ is the completion of
  $(E, \calB(E), \calH^{n-1} \hel E)$, we can find a Borel measurable
  function $g_E \colon E \to \R$ that coincides almost everywhere with
  $g \circ \gamma_E$. We let $f \in L^1(\Omega, \calB(\Omega),
  \calI^{n-1})$ be the map
  \[
  f \colon x \mapsto \begin{cases} 1 & \text{if } x \in E \text{ and }
    g_E(x) > \|g\|_\infty - \varepsilon \\ -1 & \text{if } x \in E
    \text{ and } g_E(x) < -\left(\|g\|_\infty - \varepsilon \right)
    \\ 0 & \text{otherwise}
  \end{cases}
  \]
  of norm $\|f\|_1 = \calH^{n-1}(\gamma_E^{-1}(A))$. As $p \circ
  \gamma_E$ is the inclusion map $E \to \Omega$, we have
  $\gamma_E(p(z)) = z$ for all $z \in \gamma_E(E)$. By
  Lemma~\ref{propMeasI}(3), it then follows that $\gamma_E(p(z)) = z$
  for $\hat{\calI}^{n-1}$-almost all $z \in p^{-1}(E)$. Hence, $g$ and
  $g \circ \gamma_E \circ p$ coincide $\hat{\calI}^{n-1}$-everywhere
  on $p^{-1}(E)$. Moreover, we have
  \[
  p^{-1}(E) \cap \{ g \circ \gamma_E \circ p \neq g_E \circ p\} \subset p^{-1}(\{g
  \circ \gamma_E \neq g_E\})
  \]
  By the Borel regularity of $\hat{\calI}^{n-1}$ and
  Lemma~\ref{propMeasI}(2), we also infer that $g \circ \gamma_E \circ
  p =g_E \circ p$ almost everywhere on $p^{-1}(E)$. Whence  
  \begin{align*}
  \Upsilon(g)(f) = \int_{p^{-1}(E)} g(f \circ p) \, d
  \hat{\calI}^{n-1} & = \int_{p^{-1}(E)} (g_E \circ p) (f \circ p) \,
  d \hat{\calI}^{n-1} \\ & = \int_E g_E f \, d
  \calI^{n-1},
  \end{align*}
  Thus, by construction of $f$, we have $\Upsilon(g)(f) \geq
  (\|g\|_\infty - \varepsilon) \|f\|_1$. This shows, as $\varepsilon >
  0$ can be taken arbitrarily small, that $\|\Upsilon(g)\| = 1$ and
  $\Upsilon$ is an isometry.

  Now we turn to establishing the surjectivity of $\Upsilon$. For any $E \in
  \calE$, we define a map $\alpha_E \in L^1(E, \calB(E), \calH^{n-1}
  \hel E)^*$ that is the localized version of $\alpha$: for an
  integrable function $f \in L^1(E, \calB(E), \calH^{n-1} \hel E)$, we
  set $\alpha_E(f) \defeq \alpha(\overline{f})$, where $\overline{f}$
  is the extension by zero of $f$ to $\Omega$. Since $(E, \calB(E),
  \calH^{n-1} \hel E)$ is a finite measure space, the standard duality
  between $L^1$ and $L^\infty$ spaces holds, so there is a function
  $g_E \in L^\infty(E, \calB(E), \calH^{n-1} \hel E)$ such that
  \[
  \alpha_E(f) = \int_E g_E f \, d\calH^{n-1} \text{ for all } f \in
  L^1(E, \calB(E), \calH^{n-1} \hel E).
  \]
  Obviously, for any $E, E' \in \calE$, we have
  \[
  \int_{E\cap E'} g_E f \, d \calH^{n-1} = \int_{E \cap E'} g_{E'} f
  \, d \calH^{n-1}
  \]
  for any integrable function $f$ on $E \cap E'$, which implies that
  $g_E = g_{E'}$ almost everywhere on $E \cap E'$.

  Let $g \colon \hat{\Omega} \to \R$ be the function partially defined
  by
  \[
  g(x, [E]_x) \defeq \lim_{r \to 0} \frac{1}{\balpha(n-1) r^{n-1}}
  \int_{E \cap B(x,r)} g_E \, d \calH^{n-1}
  \]
  whenever $x$ is a density point of $g_E$. The compatibility between
  the local Radon-Nikod\'{y}m derivatives $g_E$, $E \in \calE$,
  together with Lemma~\ref{sameDens}, guarantees that $g$ is
  well-defined. Denoting by $N \subset \hat{\Omega}$ the set of points
  at which $g$ is not defined, we readily see that, for any $E \in
  \calE$, the set $\gamma_E^{-1}(N)$ consists of the $x \in E$ that
  are not density points of $g_E$. We readily have
  $\calH^{n-1}(\gamma_E^{-1}(N)) = 0$, and this shows that
  $\hat{\calI}^{n-1}(N) = 0$. We can extend $g$ to $\hat{\Omega}$ by
  sending the elements of $N$ to an arbitrary value. Then $g$ is
  $\hat{\calA}$-measurable. Indeed, for any $E \in \calE$ the function
  $g \circ \gamma_E$ coincides $\calH^{n-1}$-almost every with $g_E$,
  and therefore is $\calH^{n-1}$-measurable.

  Finally, we claim that $\alpha = \Upsilon(g)$. Let $f \in L^1(\Omega,
  \calB(\Omega), \calI^{n-1})$. Choose $\varepsilon > 0$ and set $E
  \defeq \{|f| > \varepsilon\}$. As $\calI^{n-1}(E) < \infty$, we can
  suppose, up to a modification of $f$ on a negligible set, that $E
  \in \calE$. Let $f_E$ be the restriction of $f$ to $E$. Then,
  recalling that $\calH^{n-1} \hel E = \calI^{n-1} \hel E$, we have
  \[
  \alpha(\ind_{\{|f| > \varepsilon\}} f) = \alpha_E(f_E) = \int_E g_E f_E \,
  d\calI^{n-1}.
  \]
  By Lemma~\ref{propMeasI}(2), we deduce that
  \[
  \alpha(\ind_{\{|f| > \varepsilon\}} f) = \int_{p^{-1}(E)} (g_E \circ
  p)(f_E \circ p) \, d \hat{\calI}^{n-1}.
  \]
  Clearly, $f_E \circ p = f \circ p$ on $p^{-1}(E)$ and $g_E\circ p =
  g$ almost everywhere on $\gamma_E(E)$. By Lemma~\ref{propMeasI}(3),
  this ensures that $g_E \circ p = g \circ p$ almost everywhere on
  $p^{-1}(E)$. Thus,
  \[
  \alpha(\ind_{\{|f| > \varepsilon\}} f) = \int_{p^{-1}(E)} g (f \circ p)
  \, d \hat{\calI}^{n-1} = \int \ind_{\{|f \circ p| > \varepsilon\}} g (f
  \circ p) \, d \hat{\calI}^{n-1}.
  \]
  Letting $\varepsilon \to 0$ yields $\alpha(f) = \Upsilon(g)(f)$.
\end{proof}

\begin{Remark}
  The above proof exhibits a construction of the density $g$, by
  gluing the local Radon-Nikod\'ym derivatives $g_E$ in the measure
  space $(\hat{\Omega}, \hat{\calA}, \hat{\calI}^{n-1})$. The gluing
  is not possible within the measure space $(\Omega, \calB(\Omega),
  \calI^{n-1})$, as it is not localizable (if $\Omega$ is non
  empty). It was proven in \cite[Section 11]{BOU.DEP.21} that
  $(\hat{\Omega}, \hat{\calA}, \hat{\calI}^{n-1})$ is the ``minimal''
  measure space in which a compatible family of functions defined on
  subsets of $\Omega$ can be glued. Using the concept introduced in
  \cite{BOU.DEP.21}, $(\hat{\Omega}, \hat{\calA}, \hat{\calI}^{n-1})$
  is the strictly localizable version of the completion of $(\Omega,
  \calB(\Omega), \calI^{n-1})$.

  Assuming that $(\hat{\Omega}, \hat{\calA}, \hat{\calI}^{n-1})$ is
  localizable, we can give an alternate proof of Theorem~\ref{dualL1}:
  the map $\iota \colon L^1(\Omega, \calB(\Omega), \calI^{n-1}) \to
  L^1(\hat{\Omega}, \hat{\calA}, \hat{\calI}^{n-1})$ that sends $f$ to
  $f \circ p$ is an isometry, as $\|f \circ p\|_1 = \|f\|_1$ by
  Lemma~\ref{propMeasI}(3). By the localizability of $(\hat{\Omega},
  \hat{\calA}, \hat{\calI}^{n-1})$, the dual of $L^1(\hat{\Omega},
  \hat{\calA}, \hat{\calI}^{n-1})$ is $L^\infty(\hat{\Omega},
  \hat{\calA}, \hat{\calI}^{n-1})$, in such a way that $\Upsilon$ is
  just the adjoint map of $\iota$. Reasoning as before, $\Upsilon$ is
  injective, which entails that $\iota$ and then $\Upsilon$ are
  isometric isomorphisms.
\end{Remark}

\begin{Empty}[Approximate tangent cone of an approximate germ]
  \label{para2.9}
  Let $E \in \calE$ and $x \in E$. The approximate tangent cone
  $\rmTan^{n-1}(E, x)$ (see \cite[3.2.16]{GMT}) is unchanged if we
  substitute $E$ with any $E' \in \calE_x$ such that $E \sim_x
  E'$. Because of this, we can define the approximate tangent cone
  $\rmTan^{n-1}([E]_x) \defeq \rmTan^{n-1}(E, x)$ of an approximate
  germ. We recall that $\rmTan^{n-1}(E, x)$ is an $(n-1)$-plane at
  $\calH^{n-1}$-almost every point $x \in E$, and that the function $x
  \mapsto \rmTan^{n-1}(E, x) \in \bG(n-1, n)$, defined almost
  everywhere on $E$, is $\calH^{n-1}$-measurable. Here $\bG(n-1, n)$
  denotes the Grassmann manifold of hyperplanes in $\R^n$.
\end{Empty}

\begin{Theorem}
\label{main}
  Let $\varphi \colon SBV(\Omega) \to \R$ be a continuous linear
  functional. There are fields $f \in L^\infty(\Omega, \calB(\Omega),
  \calL^n)$, $g \in L^\infty(\hat{\Omega}, \hat{\calA},
  \hat{\calI}^{n-1} ; \R^n)$ and $h \in L^\infty(\Omega,
  \calB(\Omega), \calL^n ; \R^n)$ such that
  \begin{equation}
    \label{dualSBV}
    \varphi(u) = \int_\Omega fu \, d \calL^n + \int_{\hat{\Omega}} g \cdot (j_u \circ
    p) d \hat{\calI}^{n-1} + \int_\Omega h \cdot \nabla u \, d \calL^n
  \end{equation}
  where the distributional gradient $Du$ of $u$ is decomposed into its
  Lebesgue part $\calL^n \hel \nabla u$ and its jump part $D^j u = j_u
  \, \calH^{n-1} \hel S_u$. Moreover, one can select $g$ in such a way
  that $g(x, [E]_x)$ is normal to $\rmTan^{n-1}([E]_x)$ for
  $\hat{\calI}^{n-1}$-almost all $(x, [E]_x) \in \hat{\Omega}$.
\end{Theorem}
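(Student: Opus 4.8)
The plan is to realise $SBV(\Omega)$ isometrically inside a product of three $L^1$ spaces, extend $\varphi$ by Hahn--Banach, and read off the three fields from the dual of that product, identifying the middle factor through Theorem~\ref{dualL1}; the normality of $g$ is then obtained by projecting $g$ fibrewise onto the normal direction of the approximate tangent cone.

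First I would note that the jump field $j_u$ of a function $u \in SBV(\Omega)$ belongs to $L^1(\Omega, \calB(\Omega), \calI^{n-1}; \R^n)$: it vanishes off the countably $(n-1)$-rectifiable Borel set $S_u$, and $\calH^{n-1} \hel S_u = \calI^{n-1} \hel S_u$ by~\ref{sf}, so $\int_\Omega \|j_u\|\, d\calI^{n-1} = \int_{S_u} \|j_u\|\, d\calH^{n-1}$. Together with the decomposition $Du = \calL^n \hel \nabla u + \calH^{n-1} \hel j_u$ and the norm identity recalled in the introduction, this shows that $\Psi(u) \defeq (u, j_u, \nabla u)$ defines an isometric linear embedding of $SBV(\Omega)$, with its norm, into
\[
X \defeq L^1(\Omega, \calB(\Omega), \calL^n) \oplus L^1(\Omega, \calB(\Omega), \calI^{n-1}; \R^n) \oplus L^1(\Omega, \calB(\Omega), \calL^n; \R^n),
\]
endowed with the $\ell^1$-sum norm. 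By the Hahn--Banach theorem, $\varphi$ (viewed on $\Psi(SBV(\Omega))$) extends to some $\Phi \in X^*$ with $\Phi \circ \Psi = \varphi$. The dual of an $\ell^1$-sum of Banach spaces is the $\ell^\infty$-sum of their duals; since $\calL^n \hel \Omega$ is $\sigma$-finite, the outer two summands contribute $f \in L^\infty(\Omega, \calB(\Omega), \calL^n)$ and $h \in L^\infty(\Omega, \calB(\Omega), \calL^n; \R^n)$ pairing in the usual way. For the middle summand I would invoke the (straightforward) vector-valued form of Theorem~\ref{dualL1}: writing a bounded functional on $L^1(\Omega, \calB(\Omega), \calI^{n-1}; \R^n)$ coordinatewise and representing each coordinate through $\Upsilon$ produces $g \in L^\infty(\hat{\Omega}, \hat{\calA}, \hat{\calI}^{n-1}; \R^n)$ with the middle pairing equal to $v \mapsto \int_{\hat{\Omega}} g \cdot (v \circ p)\, d\hat{\calI}^{n-1}$. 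Evaluating $\Phi$ at $\Psi(u)$ yields~\eqref{dualSBV}.

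It remains to normalise $g$. For $E \in \calE$ and $x \in E$ let $P^E_x$ be the orthogonal projection of $\R^n$ onto the line normal to $\rmTan^{n-1}(E,x)$ if that cone is a hyperplane, and $P^E_x \defeq 0$ otherwise; by~\ref{para2.9} this depends only on $[E]_x$, so $g^\flat(x, [E]_x) \defeq P^E_x(g(x, [E]_x))$ is a well-defined function on $\hat{\Omega}$. Fibrewise measurability is easy: for each $E \in \calE$, both $x \mapsto \rmTan^{n-1}(E,x) \in \bG(n-1,n)$ and $x \mapsto g(\gamma_E(x))$ are $\calH^{n-1}$-measurable and $P^E_x$ depends continuously on the hyperplane, so $x \mapsto g^\flat(\gamma_E(x))$ is $\calH^{n-1}$-measurable; hence $g^\flat \in L^\infty(\hat{\Omega}, \hat{\calA}, \hat{\calI}^{n-1}; \R^n)$, $\|g^\flat\|_\infty \le \|g\|_\infty$, and $g^\flat(x, [E]_x)$ is normal to $\rmTan^{n-1}([E]_x)$ for $\hat{\calI}^{n-1}$-almost every $(x, [E]_x)$, since $\rmTan^{n-1}(E, \cdot)$ is a hyperplane $\calH^{n-1}$-a.e. on each $E \in \calE$ (and then use Lemma~\ref{propMeasI}(1)). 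Finally one must check that $g$ and $g^\flat$ induce the same middle pairing, i.e. $\int_{\hat{\Omega}} (g - g^\flat) \cdot (j_u \circ p)\, d\hat{\calI}^{n-1} = 0$ for all $u$. Since $\{j_u \ne 0\} \subset S_u$ has $\sigma$-finite $\calH^{n-1}$-measure, it is exhausted by an increasing sequence of sets $E_k \in \calE$; the integrand vanishes $\hat{\calI}^{n-1}$-a.e. off $\bigcup_k p^{-1}(E_k)$, and on each $p^{-1}(E_k)$ one has $\hat{\calI}^{n-1} \hel p^{-1}(E_k) = (\gamma_{E_k})_\#(\calH^{n-1} \hel E_k)$ — a consequence of Lemma~\ref{propMeasI}(1) and~(3) together with~\ref{para1}(A) — so that
\[
\int_{p^{-1}(E_k)} (g - g^\flat) \cdot (j_u \circ p)\, d\hat{\calI}^{n-1} = \int_{E_k} (g - g^\flat)(x, [E_k]_x) \cdot j_u(x)\, d\calH^{n-1}(x).
\]
Here $(g - g^\flat)(x, [E_k]_x)$ is tangent to $\rmTan^{n-1}(E_k, x)$, which equals $\rmTan^{n-1}(S_u, x)$ for $\calH^{n-1}$-a.e. $x \in E_k$, while $j_u(x) = (u^+ - u^-)\nu_u(x)$ is normal to $\rmTan^{n-1}(S_u, x)$ for $\calH^{n-1}$-a.e. $x \in S_u$; hence the integrand vanishes a.e., each of the above integrals is $0$, and by dominated convergence so is the integral over $\hat{\Omega}$. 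Replacing $g$ by $g^\flat$ then completes the proof.

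The main obstacle I anticipate is the last paragraph: the analytic heart of~\eqref{dualSBV} is entirely carried by Theorem~\ref{dualL1} and Hahn--Banach, but the normality refinement forces one to identify $\hat{\calI}^{n-1}$ restricted to a fibre $p^{-1}(E)$, $E \in \calE$, with the pushforward of $\calH^{n-1} \hel E$ under $\gamma_E$, and to combine this with the fact that approximate tangent cones of rectifiable sets agree almost everywhere on overlaps, so that the tangential part of $g$ genuinely drops out against the normal jump vectors.
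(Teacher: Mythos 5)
Your proposal is correct and follows essentially the same route as the paper: the isometric embedding $u \mapsto (u, j_u, \nabla u)$ into the $\ell^1$-sum of three $L^1$ spaces (with $\calI^{n-1}$ in the middle slot), a Hahn--Banach extension split across the three factors with Theorem~\ref{dualL1} handling the middle one, and then replacement of $g$ by its fibrewise normal projection. The only (cosmetic) difference is in the final step: the paper observes directly that $j_u \circ p$ is itself normal to $\rmTan^{n-1}([E]_x)$ for $\hat{\calI}^{n-1}$-almost every $(x,[E]_x)$, so that $g \cdot (j_u \circ p) = \tilde{g} \cdot (j_u \circ p)$ pointwise almost everywhere, whereas you reach the same cancellation by identifying $\hat{\calI}^{n-1} \hel p^{-1}(E_k)$ with $(\gamma_{E_k})_\#(\calH^{n-1} \hel E_k)$ and integrating over an exhaustion --- both arguments rest on the same two facts, that $\rmTan^{n-1}(E,\cdot) = \rmTan^{n-1}(S_u,\cdot)$ a.e.\ on $E \cap S_u$ and that $j_u$ is normal to $S_u$ a.e.
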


\begin{proof}
  For each $u \in SBV(\Omega)$, we have
  \[
  \|u\|_{SBV(\Omega)} = \int_\Omega |u| \, d \calL^n + \int_{S_u} \|j_u\| \,
  d \calH^{n-1} + \int_\Omega \|\nabla u\| \, d \calL^n.
  \]
  Here, the approximate discontinuity set $S_u$ (see
  \cite[3.63]{AMBROSIO.FUSCO.PALLARA}) is Borel measurable and
  countably $(n-1)$-rectifiable \cite[3.64(a) and
    3.78]{AMBROSIO.FUSCO.PALLARA}. We recall that $j_u = 0$ outside
  $S_u$, whereas $j_u = (u^+ - u^-) \nu_u$ almost everywhere on
  $S_u$. In particular, $\calH^{n-1} \hel j_u =  \calI^{n-1} \hel j_u$. Therefore, $u \mapsto (u, j_u, \nabla u)$ is an
  isometric embedding
  \[
  SBV(\Omega) \to L^1(\Omega, \calB(\Omega), \calL^n) \times
  L^1(\Omega, \calB(\Omega), \calI^{n-1}; \R^n) \times L^1(\Omega,
  \calB(\Omega), \calL^n ; \R^n).
  \]
  Hence, $\varphi$ can be split into three parts $\varphi \colon u
  \mapsto \varphi_1(u) + \varphi_2(j_u) + \varphi_3(\nabla u)$, where
  $\varphi_1 \in L^1(\Omega, \calB(\Omega), \calL^n)^*$, $\varphi_2
  \in L^1(\Omega, \calB(\Omega), \calI^{n-1}; \R^n)^*$, and $\varphi_3
  \in L^1(\Omega,\calB(\Omega), \calL^{n-1}; \R^n)^*$. The
  duality theorem~\ref{dualL1} provides an integral representation for the term
  $\varphi_2$ that acts on the jump part of the derivative and yields
  the formula~\eqref{dualSBV}.

  Regarding the second assertion, we claim that $j_u \circ p(x, [E]_x)
  = j_u(x)$ is normal to $\rmTan^{n-1}([E]_x)$ for almost all $(x,
  [E]_x) \in \hat{\Omega}$. For this, we need to show that, for all $E
  \in \calE$,
  \begin{multline*}
    \{x \in E : j_u(x) \text{ not normal to } \rmTan^{n-1}(E, x)\}
    \\ = \{x \in E \cap S_u : j_u(x) \text{ not normal to }
    \rmTan^{n-1}(E, x)\}
  \end{multline*}
  is $\calH^{n-1}$-negligible. This is clear, once we remark that
  $\rmTan^{n-1}(E, x) = \rmTan^{n-1}(S_u, x)$ for almost every $x \in
  E \cap S_u$ and that $j_u$ is normal to $S_u$ almost everywhere on
  $S_u$. Hence, we can replace the vector field $g$ by its normal part
  $\tilde{g}$ (that is, $\tilde{g}(x, [E]_x)$ is the orthogonal
  projection of $g(x, [E]_x)$ onto $\rmTan^{n-1}([E]_x)^\perp$). The
  measurability of $\tilde{g}$ follows from the discussion in
  Paragraph~\ref{para2.9}.
\end{proof}

%=======================
% BIBLIOGRAPHY AND INDEX
%=======================

\bibliographystyle{amsplain}
\bibliography{thdp}

%\printindex
\begin{comment}

\end{comment}

\end{document}